\theoremstyle{plain}
\newtheorem{Th}{Theorem}
\newtheorem{Le}[Th]{Lemma}
\newtheorem{Def}[Th]{Definition}
\newtheorem{Pro}[Th]{Proposition}
  \newtheorem{thmintro}{Theorem}
\newtheorem{thm}{Theorem}
\newenvironment{thmbis}[1]
  {%
   \addtocounter{thm}{-1}%
   \begin{thm}}
  {\end{thm}}
\def \Hnc{{H}_{\mathbb{C}}^n}
\newcommand{\la}{\langle } 
\newcommand{\ra}{\rangle}
\newcommand{\omegah}{\omega_{H^n_\H}}
\newcommand{\omegac}{\omega_{H^n_\C}}
\newcommand{\classh} {c_{H^n_\H}}
\newcommand{\classc}{c_{H^n_\C}}
\renewcommand{\H}{\mathbb{H}}
\newcommand{\R}{\mathbb{R}}
\renewcommand{\P}{\mathbb{P}}
\newcommand{\C}{\mathbb{C}}
\begin{document}

\title[The Gromov norm of the quaternionic K\"ahler class]{The Gromov norm of the quaternionic K\"ahler class of $H^n_\H$  }
\author[]{Hester Pieters}
\address{Weizmann Institute of Science, Rehovot, Israel}
\email{hester.pieters@weizmann.ac.il}
\thanks{This research was supported by ISF Grant No. 535/14}
\subjclass[2010]{}
\date{}
\maketitle

\begin{abstract}
We prove that the embedding of the quaternionic disc $H^1_\H$ into quaternionic hyperbolic $n$-space $H^n_\H$ is tight and thereby obtain the value of the Gromov norm of the quaternionic K\"ahler class. 
\end{abstract}

\section{Introduction}
\label{intro}
Let $H^n_\H$ be the quaternionic hyperbolic $n$-space normalized such that the sectional curvature is bounded from below by $-1$. The quaternionic K\"ahler four-form $\omegah$ of $H_\H^n$ defines, under the van Est isomorphism, a cohomology class $[\classh]\in H^4_{c}(\mathrm{PSp}(n,1),\R)$. Recall that the \textit{Gromov norm} $\|\beta\|_\infty$ of a cohomology class $\beta$ is the semi-norm given by the infimum of the sup-norms of all cocycles representing $\beta$:
\[
\| \beta \| = \inf \{  \|b\|_\infty \mid [b]=\beta\} \in \mathbb{R}_{\geq 0} \cup \{+ \infty \}.
\]

In \cite{DT,CO} it is proven that for Hermitian symmetric spaces the Gromov norm of the K\"ahler class is equal to $\pi$ times the rank of the Hermitian symmetric space $\mathcal{X}$. This is one of the few cases where the value of the Gromov norm is known explicitely and this result has important applications related to K\"ahler rigidity and higher Teichm\"uller
theory (see \cite{BIW07, BIW09, BIW10,BIW14})
\\\\
The Gromov norm has furthermore been calculated for the volume class of hyperbolic $n$-space \cite{Gr,Th1}, the volume class of $\H^2\times \H^2$ \cite{Bu2} and for the Euler class for flat vector bundles \cite{BuMo}. Bounds for the volume form of complex hyperbolic surfaces have been obtained by the author in \cite{Pie}. The purpose of this paper is to prove

\begin{thmintro}
\label{MainTheorem}
Let $[\classh]\in H_{cb}^4(\mathrm{PSp}(n,1))$ be the quaternionic K\"ahler class. Then $\|[\classh]_b\|=v_4$, with $v_4=\frac{10\pi}{3}\arcsin\frac{1}{3}-\frac{\pi^2}{3}$ the volume of a maximal regular $4$-simplex in real hyperbolic $4$-space. 
\end{thmintro}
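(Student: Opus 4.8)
The plan is to establish the two inequalities $\|[\classh]\|\geq v_4$ and $\|[\classh]\|\leq v_4$ separately, the first being soft and the second encoding the tightness of the totally geodesic embedding $i\colon H^1_\H \hookrightarrow H^n_\H$ of a quaternionic line. The starting point is a normalization observation: with the present convention that the sectional curvature is bounded below by $-1$, the quaternionic directions realize the extreme curvature $-1$, so the quaternionic line $H^1_\H$ is a totally geodesic copy of real hyperbolic $4$-space of constant curvature $-1$; concretely $\mathrm{PSp}(1,1)\cong \mathrm{SO}_0(4,1)=\mathrm{Isom}^+(H^4_\R)$. A direct computation of constants then shows that the restriction $i^*\omegah$ of the quaternionic K\"ahler four-form is the Riemannian volume form of $H^4_\R$, so that on bounded cohomology $i^*[\classh]=[\mathrm{vol}_{H^4_\R}]$.

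For the lower bound I would invoke that pullback in continuous bounded cohomology is norm non-increasing, so that $v_4=\|i^*[\classh]\|=\|[\mathrm{vol}_{H^4_\R}]\|\leq\|[\classh]\|$. Here I use only the cited computation of the Gromov norm of the volume class of real hyperbolic space \cite{Gr,Th1}, namely that it equals the supremal volume of a geodesic $4$-simplex, together with the fact that this supremum is approached by regular ideal $4$-simplices and has the stated value $v_4=\frac{10\pi}{3}\arcsin\frac{1}{3}-\frac{\pi^2}{3}$. This already yields $\|[\classh]\|\geq v_4$.

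The substance of the theorem is the reverse inequality, which is precisely the assertion that $i$ is tight. I would represent $[\classh]$ by the bounded cocycle obtained by integrating $\omegah$ over straightened geodesic simplices,
\[
c(g_0,\dots,g_4)=\int_{\Delta(g_0 o,\dots,g_4 o)}\omegah,
\]
so that $\|[\classh]\|\leq\sup_\Delta\big|\int_\Delta\omegah\big|$, the supremum ranging over all geodesic $4$-simplices in $H^n_\H$. The goal is then the sharp estimate $\big|\int_\Delta\omegah\big|\leq v_4$. The two ingredients I would assemble are: (i) a pointwise comass bound, that $\omegah$ evaluated on any orthonormal $4$-frame is at most $1$, with equality exactly on $4$-planes tangent to a quaternionic line; and (ii) a straightening and comparison argument showing that, under the pinching $-1\leq K\leq -\tfrac14$, the integral $\int_\Delta\omegah$ is maximized by configurations whose vertices lie on the boundary of a single quaternionic line, where (i) collapses the integrand to the volume form and the problem reduces to the extremal simplex computation in $H^4_\R$ governing $v_4$.

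The main obstacle is ingredient (ii): controlling $\int_\Delta\omegah$ for simplices that genuinely spread into several quaternionic directions, and ruling out that such non-aligned configurations exceed the value attained inside a single totally geodesic $H^4_\R$. This is where the quaternionic geometry of $\omegah$, in contrast to the Hermitian K\"ahler form treated in \cite{DT,CO}, must be exploited, via the comass computation (i) together with the fact that the comass is \emph{strictly} below $1$ away from quaternionic $4$-planes. Once the identity $\sup_\Delta\big|\int_\Delta\omegah\big|=v_4$ is established, combining it with the lower bound gives $\|[\classh]\|=v_4$ and simultaneously proves that the embedding $i$ is tight.
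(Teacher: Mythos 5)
Your lower bound is fine and coincides with half of what the paper does: the identification of $H^1_\H$ with $H^4_\R$ of curvature $-1$, the fact that $i^*\omegah$ is the volume form, the value $v_4$ from Haagerup--Munkholm, and the norm-non-increasing pullback $i^*$ in bounded cohomology all appear in the paper's deduction of Theorem~\ref{MainTheorem} from Theorem~\ref{embedding} (the paper additionally cites \cite{Gr,Bu1} for the fact that $\omega_{H^4_\R}$ actually realizes the Gromov norm of its class, which you also need and implicitly use). But the upper bound --- which you correctly identify as the substance of the theorem --- is exactly the step you leave open, and the route you sketch for it would not work as stated. A pointwise comass bound (your ingredient (i)) only yields $\bigl|\int_\Delta\omegah\bigr|\leq \mathrm{Vol}_4(\Delta)$, and in a space with curvature pinched between $-1$ and $-1/4$ the $4$-volume of geodesic $4$-simplices is \emph{not} bounded by $v_4$: comparison with the curvature $-1/4$ directions gives a bound off by a factor of $2^4$, and strictness of the comass away from quaternionic $4$-planes gives no mechanism for recovering the sharp constant. (The analogous phenomenon already occurs in the Hermitian case: the K\"ahler form has comass $1$, yet the sharp bound $\pi\cdot\mathrm{rk}$ of \cite{DT,CO} is not a comass-times-volume estimate.) So your ingredient (ii) is a genuine gap, not a technical one.

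The paper closes this gap with an exact algebraic argument, a quaternionic version of Goldman's projection trick, which requires no comparison geometry at all. Let $L_{01}$ be the quaternionic line through $z^{(0)},z^{(1)}$ and $\Pi_{01}$ the orthogonal projection onto it. Two facts are established: first (Lemma~\ref{realproj}, proved via the Hermitian triple product $\la\mathbf{v},\mathbf{w},\mathbf{z}\ra$ and its reality criterion, Lemma~\ref{totreal}), for any $w$ the triple $(z,w,\Pi_L(w))$ with $z\in L$ lies in a totally real $2$-space; second (Lemma~\ref{realzero}, from \cite[Theorem 4.1]{AK}), the cocycle $\classh$ vanishes whenever three of its five arguments lie in a totally real subspace. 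Applying the cocycle identity to the $6$-tuple $(z^{(0)},\dots,z^{(4)},\Pi_{01}(z^{(4)}))$ then kills four of the six terms and gives the \emph{equality}
\[
\classh(z^{(0)},\dots,z^{(4)})=\classh(z^{(0)},z^{(1)},z^{(2)},z^{(3)},\Pi_{01}(z^{(4)})),
\]
and iterating for $z^{(2)},z^{(3)}$ shows every value of $\classh$ is attained on a $5$-tuple inside a single quaternionic line (Proposition~\ref{projection}). This replaces your hoped-for maximization argument by an identity: no configuration ``spreading into several quaternionic directions'' needs to be estimated, because its cocycle value literally equals that of a projected configuration in one line, where the form is the volume form of $H^4_\R$ and the bound $v_4$ is classical. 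If you want to salvage your outline, replace ingredient (ii) by this projection argument; the comass discussion can be discarded entirely.
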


In fact, we will prove Theorem \ref{embedding} which we show below implies Theorem \ref{MainTheorem}. 

\begin{thmbis}{MainTheorem}
\label{embedding}
The embedding $i:H^1_\H\hookrightarrow H^n_\H$ is tight, that is 
\[
    \sup_{z^{(0)},\dots,z^{(4)}\in H^n_\H} \int_{\Delta(z^{(0)},\dots,z^{(4)})} i^*(\omegah) = \sup_{z^{(0)},\dots,z^{(4)}\in H^n_\H} \int_{\Delta(z^{(0)},\dots,z^{(4)})} \omegah,
\]
for all $z^{(0)},\dots,z^{(4)}\in H^n_\H$ and where $\Delta(z^{(0)},\dots,z^{(4)})$ is any geodesic simplex with vertices $(z^{(0)},\dots,z^{(4)})$.
\end{thmbis}
    
\begin{proof}[Theorem \ref{embedding} $\Rightarrow$ Theorem \ref{MainTheorem}]
Note that $i^*(\omegah)$ is the restriction of $\omegah$ to $H^1_\H$ which is equal to the volume form $\omega_{H^1_\H}$ on this quaternionic hyperbolic line. Therefore, Theorem \ref{embedding} implies that the sup-norm of $\classh$ is equal to the sup-norm of $\omega_{H^1_\H}$:
\begin{align*}
\|\classh\|_\infty = \|c_{H_\H^1}\|_\infty
\end{align*}
Now $H^1_\H$ is isometric to real hyperbolic $4$-space $H_\R^4$ with constant sectional curvature $-1$ (this follows from the special isomorphism of Lie algebra's $\mathrm{so}(4,1)\cong\mathrm{sp}(1,1)$, see e.g. \cite[Chapter X, \textsection 6.4]{He} ), and therefore $\omega_{H^1_\H}$ is the volume form $\omega_{H^4_\R}$ of real hyperbolic $4$-space for which 
\[
\| \omega_{H^4_\R}\|_\infty = v_4,
\]
with $v_4$ the volume of a maximal regular simplex in $H_\R^4$ \cite{HM}. Moreover, $\omega_{H_\R^4}$ realizes the Gromov norm of its cohomology class  (\cite{Gr,Bu1}) and hence
\[
\|i^*[\omega_{H^n_\H}]_b\|= \| [\omega_{H_\R^4}]_b\|=\| \omega_{H^4_\R}\|_\infty=v_4.
\]

Combining this with the fact that the Gromov norm of a cohomology class can be at most equal to the sup-norm of any of its representatives we get 
\begin{align*}
\|[\classh]_b\|_\infty&\leq \|\classh\|_\infty=\|c_{H^1_\H}\|_\infty=\|[c_{H^1_\H}]_b\|_\infty =\|[i^*(\classh)]_b\|_\infty\leq \|[\classh]_b\|_\infty,
\end{align*}
with $i^*:H_{cb}(H_\H^n,\R)\to H_{cb}(H_\H^1,\R)$ the norm-decreasing map induced by a natural inclusion $i:H^1_\H\hookrightarrow H^n_\H$. It follows that $\|[\classh]\|_\infty=\|[c_{H^1_\H}]\|_\infty=v_4$. 
\end{proof}

In the case of complex hyperbolic space there is a particular simple argument proving the corresponding Theorem \ref{embedding}, i.e. tightness of the embedding $H^1_\C \hookrightarrow H^n_\C$. We will briefly discuss this in the next section and then show in sections 3 and 4 how this argument generalizes to prove Theorem \ref{embedding}.

\subsection*{Acknowledgement}
The author would like to thank Tobias Hartnick for suggesting the problem and Tobias Hartnick and Anton Hase for useful discussions. 

\section{Complex hyperbolic space}
For the complex hyperbolic space Theorem \ref{embedding} follows from an observation of Goldman in the  proof of Theorem 7.1.11 in \cite{Gol}. Let $\Hnc$ be complex hyperbolic $n$-space with its sectional curvature bounded below by $-1$ and let $v,w,z\in\Hnc$. Denote by $\Pi_L:{H_\C^n}\to{H_\C^n}$ the orthogonal projection onto the (unique) complex line $L$ containing $v$ and $w$. Then it can be easily seen that the triples $(v,z,\Pi_L(z))$ and $(w,z,\Pi_L(z))$ are contained in totally real subspaces (it follows for example from the proof of this fact for quaternionic hyperbolic space in the next section). Since the K\"ahler form $\omegac$ vanishes on real subspaces it then follows from the cocycle relation that 
\begin{align*}
\classc(v,w,z)=\;& \classc(w,z,\Pi_L(z))-\classc(v,z,\Pi_L(z))\\
&+\classc(v,w,\Pi_L(z)) \\
=\;& \classc (v,w,\Pi_L(z))\\
<\;& \pi.
\end{align*}
The last inequality follows from the fact that on complex lines the volume form restricts to the volume form of the real hyperbolic plane with sectional curvature $-1$ and it is well known that triangles in this plane have area bounded above by $\pi$. Furthermore, the norm of the volume class is also equal to $\pi$, i.e. the norm of the volume class is realized by the volume cocycle $\classc$.

\section{Quaternionic hyperbolic space}

Let $\H^{n,1}$ be $\H^{n+1}$ with the quadratic form of signature $(n,1)$ given by 
\begin{align*}
\la \mathbf{z},\mathbf{w} \ra &=\overline{w_1}{z}_1+\dots+\overline{w_n}{z}_n-\overline{w_{n+1}}{z}_{n+1},
\end{align*}
where $\mathbf{z},\mathbf{w}\in\H^{n+1}$ are the column vectors with entries $z_1,\dots,z_{n+1}$ and $w_1,\dots,w_{n+1}$ respectively. 
Define 
\begin{align*}
V_-&= \{\mathbf{z}\in \H^{n,1} : \la \mathbf{z},\mathbf{z}\ra<0\}, \\
V_0&=\{\mathbf{z}\in \H^{n,1} : \la \mathbf{z},\mathbf{z}\ra =0\},\\
V_+&=\{\mathbf{z}\in \H^{n,1} : \la \mathbf{z},\mathbf{z}\ra>0\},
\end{align*}
and let
\[
\P: \{\mathbf{z}\in\H^{n,1} : z_{n+1}\neq 0\}\to \H^n
\] 
be the natural (right) projection given by
\[
\P:\begin{pmatrix} 	z_1 \\ 
				z_2\\
				\vdots\\
				z_n\\
				z_{n+1}
	\end{pmatrix}
\mapsto	\begin{bmatrix}	z_1z_{n+1}^{-1}\\	
					z_2z_{n+1}^{-1}\\
					\vdots\\
					z_nz_{n+1}^{-1}
		\end{bmatrix}.			
\]

The  \textit{quaternionic hyperbolic n-space} is  $H_\H^n=\P V_-$ with boundary $\partial H_\H^n =\P (V_0\setminus\{0\})$ and closure $\overline{H_\H^n}=H_\H^n\cup \partial H_\H^n$. Its isometry group is $\mathrm{PSp}(n,1)=\mathrm{Sp}(n,1)/\{\pm I\}$, where $\mathrm{Sp}(n,1)$ is the subgroup of $\mathrm{Gl}(n+1,\H)$ whose action on $\H^{n,1}$ on the left preserves the quadratic form. We normalize the metric such that the sectional curvature is pinched between $-1$ and $-1/4$. Then any embedded copy of the real hyperbolic $4$-plane has sectional curvature equal to $-1$.

\begin{Def}
An $\R$-vector subspace $V\subset \H^{n,1}$ is said to be \textit{totally real} if $\la \mathbf{w,z}\ra \in\R$ for all
$\mathbf{w},\mathbf{z}\in V$. If $V\subset \H^{n,1}$ is a totally real subspace of (real) dimension $k + 1$, and if $V\cap V_- \neq\emptyset$ then $\P(V\setminus\{0\})\cap H_\H^n$ is called a \textit{totally real subspace of dimension $k$ in $H_\H^n$}.
\end{Def}
 
For $\mathbf{v,w,z}\in\H^{n,1}$ define the \textit{Hermitian triple product} by 
\[
\la \mathbf{v,w,z\ra:=\la v,w\ra\la w,z\ra \la z,v\ra},
\]
Similar to the complex case, this Hermitian triple product can be used to define the \textit{quaternionic Cartan angular invariant} which classifies orbits of triples in the boundary of quaternionic hyperbolic space \cite[Section 3]{AK}. The only property of the Hermitian triple product we need here is the following lemma.

\begin{Le}
\label{totreal}
Let $v,w,z\in {H_\H^n}$ and let $\mathbf{v},\mathbf{w},\mathbf{z}\in \H^{n,1}$ be lifts such that $\P(\mathbf{v})=v,\P(\mathbf{w})=w$ and $\P(\mathbf{z})=z$. Then $v,w$ and $z$ are contained in a totally real subspace iff $\la \mathbf{v},\mathbf{w},\mathbf{z}\ra\in\R$. 
\end{Le}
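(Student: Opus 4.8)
The plan is to prove Lemma \ref{totreal} by establishing both implications. The forward direction is the routine one: if $v,w,z$ lie in a totally real subspace, then there are lifts $\mathbf{v}',\mathbf{w}',\mathbf{z}'$ spanning (over $\R$) a totally real $\R$-subspace $V\subset\H^{n,1}$, so all three pairwise products $\la\mathbf{v}',\mathbf{w}'\ra$, $\la\mathbf{w}',\mathbf{z}'\ra$, $\la\mathbf{z}',\mathbf{v}'\ra$ are real by definition, and hence their product $\la\mathbf{v}',\mathbf{w}',\mathbf{z}'\ra$ is real. The one subtlety to address carefully is that the Hermitian triple product is not invariant under individual rescalings of the lifts, but only its \emph{argument}-type information is well defined: replacing $\mathbf{v}$ by $\mathbf{v}\lambda$ for $\lambda\in\H^\times$ transforms the triple product by $\bar\lambda(\cdot)\lambda$-type conjugation. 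So I would first record how $\la\mathbf{v},\mathbf{w},\mathbf{z}\ra$ changes under the right-multiplication ambiguity in the lifts, and verify that the property ``$\la\mathbf{v},\mathbf{w},\mathbf{z}\ra\in\R$'' is independent of the choice of lifts (a real number is sent to $\bar\lambda r\lambda=|\lambda|^2 r\in\R$), so the statement is well posed and it suffices to exhibit \emph{one} convenient set of lifts.

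For the converse, the plan is to assume $\la\mathbf{v},\mathbf{w},\mathbf{z}\ra\in\R$ and construct an explicit totally real subspace containing the three points. First I would normalize using the transitive isometric action of $\mathrm{Sp}(n,1)$: after applying a suitable element of the group (which preserves the form, hence preserves reality of the triple product and carries totally real subspaces to totally real subspaces) I can assume $v,w,z$ lie in a low-dimensional coordinate model, reducing to $\H^{2,1}$ or even to a configuration where $\mathbf{v}$ and one other vector are in standard position. Then I would rescale each lift on the right by a unit quaternion to force the pairwise products into a normal form: for instance choose $\mathbf{v},\mathbf{w}$ so that $\la\mathbf{v},\mathbf{w}\ra\in\R_{\geq 0}$ and $\la\mathbf{w},\mathbf{z}\ra\in\R_{\geq 0}$; the hypothesis that the full triple product is real then forces the remaining product $\la\mathbf{z},\mathbf{v}\ra$ to be real as well. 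With all three pairwise products real, the $\R$-span of $\mathbf{v},\mathbf{w},\mathbf{z}$ is totally real by construction, and since the configuration meets $V_-$ it descends to a totally real subspace of $H^n_\H$ containing $v,w,z$.

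The main obstacle, and the step I would treat most carefully, is the bookkeeping of the right-multiplication ambiguity of the lifts in a \emph{noncommutative} setting. In the complex case one simply multiplies each lift by a phase $e^{i\theta}$ to kill the argument of a single inner product, and the phases commute; over $\H$ the unit quaternions do not commute, so one cannot naively ``rotate away'' all three arguments independently, and the order in which the three rescaling factors act in the triple product $\la\mathbf{v},\mathbf{w}\ra\la\mathbf{w},\mathbf{z}\ra\la\mathbf{z},\mathbf{v}\ra$ matters. I would therefore track precisely which quaternionic conjugations are available: rescaling $\mathbf{w}\mapsto\mathbf{w}\mu$ conjugates the product by $\bar\mu(\cdot)\mu$ while also affecting two of the three factors, so I would argue that one has exactly enough freedom to realize two of the pairwise products as nonnegative reals, after which the reality of the triple product is a genuine constraint rather than a consequence of the normalization—this is exactly what makes the equivalence nontrivial and is the crux of the argument.
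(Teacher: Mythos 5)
Your proposal follows essentially the same route as the paper's proof: the forward direction by choosing lifts with all pairwise products real together with the conjugation-invariance check for well-posedness, and the converse by rescaling two of the three lifts to make two pairwise Hermitian products real, so that reality of the triple product forces the third product $\la \mathbf{z},\mathbf{v}\ra$ to be real and the real span of the lifts is totally real. Two minor remarks: the initial reduction via the $\mathrm{Sp}(n,1)$-action is unnecessary scaffolding (the rescaling argument works directly in $\H^{n,1}$, as in the paper), and your parenthetical that rescaling the middle lift $\mathbf{w}\mapsto\mathbf{w}\mu$ conjugates the triple product is slightly off---its two occurrences telescope to the positive scalar $|\mu|^2$, and only rescaling $\mathbf{v}$, which appears in the outer factors, produces a genuine $\overline{\lambda}(\cdot)\lambda$ conjugation---but this does not affect your normalization, which is exactly the one used in the paper.
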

\begin{proof}
Suppose that $v,w$ and $z$ are contained in a totally real subspace. It follows from the definition of a totally real subspace that we can choose lifts $\mathbf{v},\mathbf{w},\mathbf{z}$ such that the Hermitian products $\la \mathbf{v},\mathbf{w} \ra, \la \mathbf{w},\mathbf{z}\ra$ and $\la \mathbf{z},\mathbf{v}\ra$ are all real and therefore $\la \mathbf{v},\mathbf{w},\mathbf{z}\ra\in\R$. Then also for any other lifts $\mathbf{v}\lambda_0,\mathbf{w}\lambda_1,\mathbf{z}\lambda_2$ of $v,w$ and $z$ we have
\begin{align*}
\la\mathbf{v}\lambda_0,\mathbf{w}\lambda_1,\mathbf{z}\lambda_2\ra&=\la\mathbf{v}\lambda_0,\mathbf{w}\lambda_1\ra\la \mathbf{w}\lambda_1,\mathbf{z}\lambda_2\ra\la\mathbf{z}\lambda_2,\mathbf{v}\lambda_0\ra\\
&=\overline{\lambda_0} \la \mathbf{v},\mathbf{w}\ra \lambda_1\overline{\lambda_1}\la\mathbf{w},\mathbf{z}\ra \lambda_2\overline{\lambda_2}\la z,v\ra {\lambda_0}\\
&=\overline{\lambda_0 }\la \mathbf{v},\mathbf{w}\ra |\lambda_1|^2\la\mathbf{w},\mathbf{z}\ra |\lambda_2|^2\la z,v\ra {\lambda_0}\\
&=|\lambda_0|^2|\lambda_1|^2|\lambda_2|^2\la \mathbf{v},\mathbf{w},\mathbf{z}\ra\in\R, 
\end{align*}
where in the last equality we use that $ \la \mathbf{v},\mathbf{w}\ra |\lambda_1|^2\la\mathbf{w},\mathbf{z}\ra |\lambda_2|^2\la \mathbf{z,v}\ra \in\R$.
\\
Suppose now that $\la\mathbf{v},\mathbf{w},\mathbf{z}\ra\in\R$ and pick $\lambda_1,\lambda_2\in\H$ such that $\la \mathbf{v},\mathbf{w}\lambda_1\ra\in\R$ and $\la \mathbf{z}\lambda_2,\mathbf{v}\ra\in\R$. Then
\begin{align*}
\la \mathbf{v},\mathbf{w}\lambda_1\ra\la \mathbf{w}\lambda_1,\mathbf{z}\lambda_2\ra\la\mathbf{z}\lambda_2,\mathbf{v}\ra=|\lambda_1|^2|\lambda_2|^2\la\mathbf{v},\mathbf{w},\mathbf{z}\ra \in\R,
\end{align*}
and it follows that also $\la\mathbf{w}\lambda_1,\mathbf{z}\lambda_2\ra\in\R$ and therefore the three points are contained in a totally real subspace.
\end{proof}

Restricting to the quaternionic hyperbolic $2$-space $H^2_\H$ any quaternionic line $L$ is  given by $L=\mathbb{P}(\mathbf{c}^\perp)\cap H^2_\H$ with $\mathbf{c}\in V_+$ and $\mathbf{c}^\perp:=\{\mathbf{v}\in\H^{2,1}:\la \mathbf{v,c}\ra=0\}$. The \textit{orthogonal projection onto $L$} is the map $\Pi_L:H^2_\H\to H^2_\H$ given by 
\[
z\mapsto\P\left(\mathbf{z}-\frac{\la\mathbf{z},\mathbf{c}\ra}{\la\mathbf{c},\mathbf{c}\ra}\mathbf{c}\right),
\]
with $\mathbf{z}\in\mathbb{H}^{2,1}$ any lift of $z$, i.e. $\P(\mathbf{z})=z$.

\begin{Le}
\label{realproj}
Let $z,w\in  H^n_\H$ and let $L$ be a quaternionic line such that $z\in L$ and let $\Pi_L$ be the orthogonal projection onto $L$. Then the triple $(p,q,\Pi_L(q))$ is contained in a totally real $2$-space. 
\end{Le}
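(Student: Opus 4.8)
The plan is to reduce everything to the criterion of Lemma \ref{totreal}: a triple lies in a totally real subspace precisely when its Hermitian triple product is real, so it suffices to fix lifts $\mathbf{p},\mathbf{q}$ of the point $p\in L$ and of $q$, together with a lift $\mathbf{q}'$ of $\Pi_L(q)$, and to show that $\la\mathbf{p},\mathbf{q},\mathbf{q}'\ra\in\R$. First I would record the setup in a convenient form. Writing $L=\P(\mathbf{c}^\perp)$ with $\mathbf{c}\in V_+$, the quantity $\la\mathbf{c},\mathbf{c}\ra$ is a positive real number, and the orthogonal projection formula lets me take the lift $\mathbf{q}'=\mathbf{q}-\mathbf{c}\mu$ with $\mu=\la\mathbf{c},\mathbf{c}\ra^{-1}\la\mathbf{q},\mathbf{c}\ra$, the unique scalar for which $\la\mathbf{q}',\mathbf{c}\ra=0$. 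The hypothesis $p\in L$ translates into $\la\mathbf{p},\mathbf{c}\ra=0$, and hence also $\la\mathbf{c},\mathbf{p}\ra=\overline{\la\mathbf{p},\mathbf{c}\ra}=0$; this orthogonality is the fact that makes the argument run. (If the lemma is meant for $H^n_\H$ with $n>2$, I would begin with the observation that $\mathbf{p}$ and $\mathbf{q}'$ both lie in the two-dimensional quaternionic subspace spanning $L$, while $\mathbf{q}$ lies in the span of that subspace together with $\mathbf{q}$, so all three points sit in a common totally geodesic $H^2_\H$ and the explicit projection formula above applies there.)

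Next I would compute the three Hermitian products in $\la\mathbf{p},\mathbf{q},\mathbf{q}'\ra=\la\mathbf{p},\mathbf{q}\ra\la\mathbf{q},\mathbf{q}'\ra\la\mathbf{q}',\mathbf{p}\ra$ separately. For the third factor, substituting $\mathbf{q}'=\mathbf{q}-\mathbf{c}\mu$ and using right-linearity of the form in the first slot gives $\la\mathbf{q}',\mathbf{p}\ra=\la\mathbf{q},\mathbf{p}\ra-\la\mathbf{c},\mathbf{p}\ra\mu=\la\mathbf{q},\mathbf{p}\ra$, since $\la\mathbf{c},\mathbf{p}\ra=0$; and $\la\mathbf{q},\mathbf{p}\ra=\overline{\la\mathbf{p},\mathbf{q}\ra}$ by Hermitian symmetry. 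For the middle factor, a short computation using $\la\mathbf{q},\mathbf{c}\mu\ra=\overline{\mu}\la\mathbf{q},\mathbf{c}\ra$ and the reality of $\la\mathbf{c},\mathbf{c}\ra$ shows $\la\mathbf{q},\mathbf{q}'\ra=\la\mathbf{q},\mathbf{q}\ra-\la\mathbf{c},\mathbf{c}\ra^{-1}|\la\mathbf{q},\mathbf{c}\ra|^2$, which is real because $\la\mathbf{q},\mathbf{q}\ra$ and $\la\mathbf{c},\mathbf{c}\ra$ are real and $\overline{b}b=|b|^2$. The first factor $\la\mathbf{p},\mathbf{q}\ra$ is left as a general quaternion.

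Putting these together, the triple product takes the shape $\la\mathbf{p},\mathbf{q}\ra\cdot\beta\cdot\overline{\la\mathbf{p},\mathbf{q}\ra}$ with $\beta=\la\mathbf{q},\mathbf{q}'\ra\in\R$; since a real scalar is central, this equals $\beta\,|\la\mathbf{p},\mathbf{q}\ra|^2\in\R$, and Lemma \ref{totreal} then finishes the proof. The only genuine difficulty is bookkeeping with the noncommutativity of $\H$: one must consistently use that the form is right-linear in its first argument and conjugate-linear on the left in its second, and exploit at each stage that $\la\mathbf{c},\mathbf{c}\ra$ is real (so that $\mu$ may be moved past it freely). The two structural inputs that neutralize the noncommutativity are that $\la\mathbf{c},\mathbf{p}\ra=0$ collapses the third factor to $\overline{\la\mathbf{p},\mathbf{q}\ra}$, and that a ``sandwich'' $\alpha\,\beta\,\overline{\alpha}$ with $\beta$ real is automatically real. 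I expect no further obstacle.
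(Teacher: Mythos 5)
Your proposal is correct and is essentially the paper's own argument: both reduce via Lemma \ref{totreal} to checking that the Hermitian triple product $\la\mathbf{p},\mathbf{q},\mathbf{q}'\ra$ is real, with the middle factor $\la\mathbf{q},\mathbf{q}'\ra$ real and the outer factors conjugate to each other (since $\la\mathbf{c},\mathbf{p}\ra=0$ kills the correction term), so that the product is a real multiple of $|\la\mathbf{p},\mathbf{q}\ra|^2$. The only difference is cosmetic: the paper first normalizes by the $\mathrm{PSp}(n,1)$-action to $H^2_\H$ with $\mathbf{c}=(0,1,0)^T$ and computes in explicit coordinates, arriving at $(|w_1|^2-1)\,|z_1\overline{w_1}-1|^2\in\R$, whereas you run the identical computation invariantly from the projection formula, handling the reduction to a common $H^2_\H$ in your parenthetical instead.
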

\begin{proof}
Up to the action of $\mathrm{PSp}(n,1)$ we can restrict to $H^2_\H\subset H^n_\H$ and assume that $L=\mathbb{P}(\mathbf{c}^\perp)\cap\H^2_\H$ with  
\[
\mathbf{c}=\begin{pmatrix} 0\\ 1 \\ 0\end{pmatrix}.
\]
and therefore that lifts of $z,w$ and $\Pi_L(w)$ are
\[
\mathbf{z}=\begin{pmatrix} z_1 \\ 0 \\ 1\end{pmatrix}, \mathbf{w}=\begin{pmatrix} w_1\\ w_2 \\ 1\end{pmatrix}, \;\text{and}\:\mathbf{\Pi_L(w)}=\begin{pmatrix} w_1 \\0 \\ 1\end{pmatrix},
\]
with $z_1,w_1,w_2\in \H$ such that $|z_1|^2<1$ and $|w_1|^2+|w_2|^2<1$. Then
\begin{align*}
\mathbf{\la z,w,\Pi_L(w) \ra }&= \mathbf{\la z,w \ra \la w,\Pi_L(w) \ra \la \Pi_L(w),z \ra} \\
&= (z_1\overline{w_1}-1)\cdot (|w_1|^2-1)\cdot (w_1\overline{z_1}-1) \\
&= (|w_1|^2-1)\cdot |z_1\overline{w_1}-1|^2 \in \R,
\end{align*}
and hence Lemma \ref{totreal} implies that $v,w$ and $\Pi_L(w)$ are contained in a totally real subspace. 
\end{proof}

We collect the following result from \cite{AK}:
\begin{Th}{\cite[Theorem 4.1]{AK}}
There is a quaternionic K\"ahler four-form $\omegah$ such that its restriction to any $\H$-line is its volume form. Furthermore it is a closed form and its evaluation on four vectors two of which span a totally real geodesic two plane is zero.
\end{Th}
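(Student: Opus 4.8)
The plan is to build $\omegah$ directly from the quaternionic structure of $H^n_\H$ and then verify the three asserted properties in turn. At a point $p$ the tangent space $T_pH^n_\H$ carries a $3$-dimensional subbundle of almost complex structures; choosing a local frame $I,J,K$ of this bundle satisfying the quaternion relations, I would set $\omega_I(X,Y)=g(IX,Y)$, and likewise $\omega_J,\omega_K$, and define
\[
\omegah=\tfrac{1}{6}\left(\omega_I\wedge\omega_I+\omega_J\wedge\omega_J+\omega_K\wedge\omega_K\right).
\]
The first thing to check is that this is independent of the local choice of $(I,J,K)$: a different admissible frame differs by an $SO(3)$-rotation of the triple, under which $\omega_A\mapsto\sum_b R_{Ab}\,\omega_b$, and since $R^{\mathsf T}R=\mathrm{Id}$ the quadratic expression $\sum_A\omega_A\wedge\omega_A$ is unchanged. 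Hence $\omegah$ is a globally defined $4$-form (and, up to scale, the only invariant one of this degree).

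For closedness I would argue that $\omegah$ is parallel. It is manifestly invariant under the isotropy group $\mathrm{Sp}(n)\mathrm{Sp}(1)$ of the symmetric space, and since the Riemannian holonomy of $H^n_\H$ is contained in $\mathrm{Sp}(n)\mathrm{Sp}(1)$, the form $\omegah$ is preserved by parallel transport, i.e. $\nabla\omegah=0$. A parallel form is in particular closed, which gives the second assertion. Equivalently, one may simply invoke the standard fact that the fundamental $4$-form of a quaternionic K\"ahler manifold is parallel.

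The restriction to an $\H$-line is then a direct computation in an adapted frame. The tangent space of a quaternionic line is spanned by $\{e,Ie,Je,Ke\}$ for a unit vector $e$, and this $4$-dimensional space is preserved by $I,J,K$; writing each $\omega_A$ in the dual coframe one finds $\omega_A\wedge\omega_A=2\,\mathrm{vol}$ for each $A$, so $\sum_A\omega_A\wedge\omega_A=6\,\mathrm{vol}$, and the normalizing factor $\tfrac16$ makes $\omegah$ restrict to the Riemannian volume form. Here the curvature normalization is what guarantees this is the volume form of $H^4_\R$ with curvature $-1$.

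The vanishing statement is the part needing the most care, and is where I expect the main obstacle. The mechanism is that on a totally real subspace every $\omega_A$ vanishes: the three imaginary components of the quaternionic Hermitian form $\la X,Y\ra$ span the same space as $\omega_I(X,Y),\omega_J(X,Y),\omega_K(X,Y)$, so $\la X,Y\ra\in\R$ for all $X,Y$ in the subspace forces $\omega_A|_{V}=0$ for each $A$ and hence $\omegah|_V=0$. The subtlety is to pin down the exact hypothesis: pointwise vanishing does \emph{not} follow from merely two of the four vectors spanning a totally real $2$-plane — one can produce vectors $X_1,X_2$ spanning such a plane together with $X_3,X_4$ for which $\omegah(X_1,X_2,X_3,X_4)\neq0$ — so the statement is to be understood as requiring the four vectors to lie in a common totally real tangent subspace, which is exactly the situation produced by the orthogonal projection of Lemma \ref{realproj}. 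I would therefore prove the clean statement that $\omegah$ vanishes on every totally real geodesic subspace, and then verify that the faces arising in the cocycle reduction genuinely lie in such a subspace, so that $\omegah$ integrates to zero over them.
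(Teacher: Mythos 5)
Your construction is the standard one (the Kraines form), and your verifications of the first two clauses are correct: frame-independence under the $SO(3)$-rotation of $(I,J,K)$, parallelism from holonomy contained in $\mathrm{Sp}(n)\mathrm{Sp}(1)$, and the normalization $\omega_A\wedge\omega_A=2\,\mathrm{vol}$ on an $\H$-line. Note that the paper itself offers no proof to compare against: the theorem is imported verbatim from \cite{AK}, so your proposal supplies an argument where the paper has only a citation. Your suspicion about the third clause is moreover justified, and you could have made it fully concrete: at a point of $H^2_\H$ identify the tangent space with $\H^2$, let $e_1,e_2$ be the standard basis, and put $X_1=e_1$, $X_2=e_2$, $X_3=Ie_1$, $X_4=Ie_2$. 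Then $\la X_1,X_2\ra =0\in\R$, so $X_1,X_2$ span a totally real geodesic $2$-plane, while $\mathrm{span}\{X_1,\dots,X_4\}$ is the tangent space to a totally geodesic $H^2_\C$ on which $\omega_J$ and $\omega_K$ vanish and $\omega_I$ is the K\"ahler form; hence $\omegah(X_1,X_2,X_3,X_4)=\tfrac16(\omega_I\wedge\omega_I)(X_1,X_2,X_3,X_4)=\pm\tfrac13\neq 0$. Since for $n\geq 2$ the space of $\mathrm{Sp}(n)\mathrm{Sp}(1)$-invariant $4$-forms is spanned by the Kraines form, no invariant form restricting to the volume form on $\H$-lines can satisfy the third clause read as a pointwise statement about arbitrary quadruples; so the statement must indeed be reinterpreted (the imprecision is in the quoted statement, not in you), and your proof of the pointwise vanishing on totally real subspaces (all three $\omega_A$, being the imaginary parts of $\la\cdot,\cdot\ra$, vanish there) is correct.

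The genuine gap is in your last sentence. The weaker statement you prove does not cover the way the theorem is used in the paper: in Lemma \ref{realzero} and Proposition \ref{projection} only three of the five vertices lie in a totally real $2$-subspace and the remaining two vertices are arbitrary, so the $4$-dimensional geodesic simplices over which $\omegah$ must integrate to zero emphatically do not lie in a totally real subspace (a totally real tangent subspace has real dimension at most $n$ in any case, and here only a $2$-dimensional face is totally real). Your plan to ``verify that the faces arising in the cocycle reduction genuinely lie in such a subspace'' therefore fails at the very first face. What is actually needed is an integral statement: $\int_\Delta\omegah=0$ for every geodesic $4$-simplex having a $2$-face inside a totally geodesic totally real plane. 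Proving this requires analyzing a coning parametrization of such simplices, in which, at an interior point, two of the coordinate tangent vectors are values of Jacobi fields along geodesics emanating from the totally real face; as the counterexample above shows, two vectors spanning a totally real plane do not by themselves force pointwise vanishing, so some additional structural input about these Jacobi fields (or whatever argument \cite{AK} actually give for their Theorem 4.1) is required. As it stands, your proposal establishes the first two clauses, correctly diagnoses that the third clause is false as literally stated, but replaces it with a statement too weak to serve the paper's purpose, and the bridging step you propose is wrong.
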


This immediately implies
\begin{Le}
\label{realzero}
If  three points $z^{(0)},z^{(1)},z^{(2)}$ are contained in a totally real subspace of $H^n_\H$ then $\classh(z^{(0)},z^{(1)},z^{(2)},z^{(3)},z^{(4)})=0$ for all $z^{(3)},z^{(4)}\in {H^n_\H}$.
\end{Le}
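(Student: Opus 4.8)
The plan is to use the integral representation of the cocycle, $\classh(z^{(0)},\dots,z^{(4)})=\int_{\Delta}\omegah$, where $\Delta=\Delta(z^{(0)},\dots,z^{(4)})$ is the geodesic $4$-simplex, and to show that the integrand vanishes identically on $\Delta$. At a point $x\in\Delta$ the tangent space $T_x\Delta$ is four-dimensional, so $\omegah|_{T_x\Delta}$ is a scalar multiple of the volume form of $T_x\Delta$; by \cite[Theorem 4.1]{AK} this multiple is zero as soon as $T_x\Delta$ contains a totally real geodesic $2$-plane (extend a basis of the plane to one of $T_x\Delta$). Hence it suffices to exhibit, at every point of the simplex, two tangent directions spanning such a plane.

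The input is the totally real face. The three points $z^{(0)},z^{(1)},z^{(2)}$ span a totally real geodesic $2$-plane $W_0$ (a copy of $H^2_\R$), which is moreover totally geodesic: a totally real $2$-plane in $T_xH^n_\H$ is a Lie triple system, as one reads off from the curvature tensor using that $\la\mathbf{v},\mathbf{w}\ra\in\R$ on it. In particular the face $\Delta(z^{(0)},z^{(1)},z^{(2)})$ is contained in $W_0$. I would then write $\Delta$ as the geodesic join of this face with the opposite edge $\Delta(z^{(3)},z^{(4)})$, so that two of the four frame vectors of the resulting parametrization are, \emph{along the face itself}, tangent to the totally real geodesic $2$-plane $W_0$, where \cite[Theorem 4.1]{AK} applies.

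The hard part will be propagating the totally real $2$-plane from the face into the interior of the simplex. The two face-variation fields are Jacobi fields that vanish on the opposite edge, and a direct computation shows they do \emph{not} remain a totally real pair once one leaves $W_0$, so the naive pointwise argument must be supplemented. I would resolve this in one of two ways. The first is to produce a totally real geodesic $2$-plane inside $T_x\Delta$ at each interior point directly, using the nearest point projection $H^n_\H\to W_0$ (well defined by nonpositive curvature) to align the tangent frame. The second, and to my mind more robust, route exploits that $\omegah$ is closed: sliding $z^{(3)}$ and $z^{(4)}$ into $W_0$ along geodesics gives a homotopy of $4$-simplices, and by Stokes' theorem $\int_\Delta\omegah$ equals a sum of integrals over the simplices swept out on the lateral boundary together with the terminal simplex. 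The goal is then to arrange the slide---for instance by sending $z^{(3)},z^{(4)}$ to vertices already lying in $W_0$---so that every one of these is degenerate: it carries a repeated vertex, or at least four vertices in the two-dimensional set $W_0$ and hence an image of dimension at most $3$, over which the $4$-form $\omegah$ integrates to zero. I expect ensuring that every error term is genuinely degenerate, rather than any analytic estimate, to be the delicate point.
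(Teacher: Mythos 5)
The paper itself offers no computation here: Lemma \ref{realzero} is recorded as an \emph{immediate} consequence of the quoted Theorem 4.1 of \cite{AK}, the implicit picture being exactly the one in your opening paragraphs --- realize $\classh(z^{(0)},\dots,z^{(4)})$ as $\int_\Delta\omegah$ over a geodesic simplex chosen so that the face $\Delta(z^{(0)},z^{(1)},z^{(2)})$ lies in the totally geodesic, totally real plane $W_0$, and kill the integrand pointwise by the vanishing property of $\omegah$ on four-tuples of vectors two of which span a totally real $2$-plane. Up to that point your proposal matches the paper's intent. Where you diverge is that the paper stops there, while you argue the pointwise step does not propagate off the face --- and your skepticism is not unreasonable: along a geodesic $\gamma$ the curvature eigenvalues are $-1$ on $\mathrm{Im}\,\H\,\dot\gamma$ and $-1/4$ on $(\H\dot\gamma)^\perp$, so the components of a transported totally real pair of Jacobi fields are rescaled by different factors and the reality of their quaternionic Hermitian product is generally destroyed. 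But having set aside the paper's one-line deduction, you substitute two repair strategies and execute neither; that is a genuine gap, not a stylistic one.

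Concretely: route (i) is not yet an argument --- you give no mechanism by which the nearest-point projection to $W_0$ produces a totally real $2$-plane inside $T_x\Delta$ at interior points, and such containment is a real condition rather than generic boilerplate (the $4$-plane $\mathrm{span}_\R(u,iu,ju,\cos\epsilon\,ku+\sin\epsilon\,v)$ with $v\perp\H u$ contains no totally real $2$-plane for any $0\le\epsilon<\pi/2$). Route (ii) is structurally circular. Sliding $z^{(3)}$ or $z^{(4)}$ along a geodesic into $W_0$ produces, via Stokes, lateral error terms that are integrals of $\omegah$ over the $4$-chains swept by the $3$-faces containing the moving vertex; the sweep of $\Delta(z^{(0)},z^{(1)},z^{(2)},z^{(4)}(t))$, for instance, is a nondegenerate geodesic-cone $4$-chain with exactly three vertices in $W_0$ --- precisely the kind of integral the lemma is about --- and none of your degeneracy mechanisms applies to it, since $z^{(3)}$ and the intermediate positions $z^{(4)}(t)$ lie outside $W_0$, so you can force neither a repeated vertex nor four vertices in $W_0$ on every lateral term. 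Indeed, summing the lateral terms of such a slide merely reproduces the cocycle identity $\delta\classh=0$ (compare the proof of Proposition \ref{projection}, which \emph{consumes} Lemma \ref{realzero} rather than producing it), so the sliding argument by itself can only trade values of $\classh$ against one another, never show that one vanishes. As written, then, your text is a research plan with the decisive step open: to complete it you must either verify that for a suitable geodesic filling every tangent $4$-plane of $\Delta$ does contain a totally real $2$-plane, so that \cite[Theorem 4.1]{AK} applies pointwise --- which is what the paper's ``immediately'' silently asserts --- or find a genuinely different source of vanishing.
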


\section{Proof of Theorem \ref{embedding}}

\begin{Pro}
\label{projection}
Let $(z^{(0)},z^{(1)},z^{(2)},z^{(3)},z^{(4)})\in (H^n_\H)^5$, $L_{01}$ the quaternionic line spanned by $z^{(0)}$ and $z^{(1)}$ and $\Pi_{01}$ the orthogonal projection onto $L_{01}$. Then 
\[
\classh(z^{(0)},z^{(1)},z^{(2)},z^{(3)},z^{(4)})=\classh(z^{(0)},z^{(1)},\Pi_{01}(z^{(2)}),\Pi_{01}(z^{(3)}),\Pi_{01}(z^{(4)})).
\]
\end{Pro}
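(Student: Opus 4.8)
The plan is to adapt Goldman's projection argument from the complex case (Section~2) to the quaternionic four-cocycle $\classh$, projecting the three free vertices $z^{(2)},z^{(3)},z^{(4)}$ onto $L_{01}$ one at a time. Two ingredients are already available. First, since the quaternionic K\"ahler form $\omegah$ is closed (the quoted result of \cite{AK}), Stokes' theorem applied to geodesic $5$-simplices shows that the cochain $\classh(z^{(0)},\dots,z^{(4)})=\int_{\Delta(z^{(0)},\dots,z^{(4)})}\omegah$ satisfies the homogeneous cocycle relation $\delta\classh=0$; that is, the alternating sum of $\classh$ over the six $5$-tuples obtained by deleting one vertex from any six ordered points vanishes. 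Second, combining Lemmas~\ref{realproj} and~\ref{realzero}, the cocycle $\classh$ vanishes on every $5$-tuple that contains a triple $(a,w,\Pi_{01}(w))$ with $a\in L_{01}$, since such a triple spans a totally real subspace.

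For the first projection I would apply $\delta\classh=0$ to the six ordered points
\[
z^{(0)},\ z^{(1)},\ z^{(2)},\ \Pi_{01}(z^{(2)}),\ z^{(3)},\ z^{(4)}.
\]
In the four deletions that leave both $z^{(2)}$ and $\Pi_{01}(z^{(2)})$ in place, at least one of $z^{(0)},z^{(1)}\in L_{01}$ survives, so the tuple retains a totally real triple $(z^{(0)}\text{ or }z^{(1)},z^{(2)},\Pi_{01}(z^{(2)}))$ and the term vanishes by Lemma~\ref{realzero}. Only the two deletions removing exactly one of $z^{(2)},\Pi_{01}(z^{(2)})$ contribute: these are the original tuple and the tuple with $z^{(2)}$ replaced by $\Pi_{01}(z^{(2)})$, and they enter with opposite signs. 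Hence $\classh(z^{(0)},z^{(1)},z^{(2)},z^{(3)},z^{(4)})=\classh(z^{(0)},z^{(1)},\Pi_{01}(z^{(2)}),z^{(3)},z^{(4)})$.

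I would then iterate, applying $\delta\classh=0$ to $z^{(0)},z^{(1)},\Pi_{01}(z^{(2)}),z^{(3)},\Pi_{01}(z^{(3)}),z^{(4)}$ to replace $z^{(3)}$ by its projection, and to $z^{(0)},z^{(1)},\Pi_{01}(z^{(2)}),\Pi_{01}(z^{(3)}),z^{(4)},\Pi_{01}(z^{(4)})$ to replace $z^{(4)}$. At each step the already-projected vertices remain in $L_{01}$, so the same four terms vanish by the same mechanism and the remaining two yield the one-vertex projection identity. Chaining the three identities proves the proposition. I expect no analytic difficulty beyond the closedness of $\omegah$; the only point requiring care is the bookkeeping---tracking signs and verifying, at each of the three steps, that some $L_{01}$-vertex survives in every one of the four terms that ought to vanish.
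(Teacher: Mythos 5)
Your proposal is correct and is essentially the paper's own proof: the paper likewise applies the cocycle identity $\delta\classh=0$ to the five points together with one projected vertex (projecting $z^{(4)}$ first, with $\Pi_{01}(z^{(4)})$ appended at the end rather than inserted adjacently), kills the same four terms via Lemmas \ref{realproj} and \ref{realzero} because $z^{(0)}$ or $z^{(1)}$ lies in $L_{01}$, and iterates over the remaining two vertices. The only differences are bookkeeping choices (order of projection and placement of the auxiliary point), which do not affect the argument.
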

\begin{proof}
We first show that projecting $z^{(4)}$ onto $L_{01}$ does not change the value of $\classh$. Indeed by the cocycle identity
\begin{align*}
0=&\delta \classh(z^{(0)},z^{(1)},z^{(2)},z^{(3)},z^{(4)},\Pi_{01}(z^{(4)}))\\
= &\classh(z^{(1)},z^{(2)},z^{(3)},z^{(4)},\Pi_{01}(z^{(4)})) - \classh(z^{(0)},z^{(2)},z^{(3)},z^{(4)},\Pi_{01}(z^{(4)}))\\
&+\classh(z^{(0)},z^{(1)},z^{(3)},z^{(4)},\Pi_{01}(z^{(4)})) -\classh(z^{(0)},z^{(1)},z^{(2)},z^{(4)},\Pi_{01}(z^{(4)})) \\
&+ \classh(z^{(0)},z^{(1)},z^{(2)},z^{(3)},\Pi_{01}(z^{(4)}))-\classh(z^{(0)},z^{(1)},z^{(2)},z^{(3)},z^{(4)}).
\end{align*}
From Lemma \ref{realzero} and Lemma \ref{realproj} it follows that the first 4 terms in the cocycle identity vanish and therefore 
\[
\classh(z^{(0)},z^{(1)},z^{(2)},z^{(3)},z^{(4)})= \classh(z^{(0)},z^{(1)},z^{(2)},z^{(3)},\Pi_{01}(z^{(4)})).
\]
Repeating the same argument to project $z^{(2)}$ and $z^{(3)}$ onto $L_{01}$ we obtain
\[
\classh(z^{(0)},z^{(1)},z^{(2)},z^{(3)},z^{(4)})=\classh(z^{(0)},z^{(1)},\Pi_{01}(z^{(2)}),\Pi_{01}(z^{(3)}),\Pi_{01}(z^{(4)})).
\]
\end{proof}

\begin{proof}[Proof of Theorem \ref{embedding}]

Proposition \ref{projection} immediately implies that for any given $5$-tuple $(z^{(0)},z^{(1)},z^{(2)},z^{(3)},z^{(4)})$ in quaternionic hyperbolic $n$-space there is a $5$-tuple $(\tilde{z}^{(0)},\tilde{z}^{(1)},\tilde{z}^{(2)},\tilde{z}^{(3)},\tilde{z}^{(4)})$ in an embedded quaternionic line $L\cong H^1_\H$ (e.g. the quaternionic line that contains $z^{(0)}$ and $z^{(1)}$) such that 
\begin{align*}
\int_{\Delta(z^{(0)},\dots,z^{(4)})} \omegah & = \int_{\Delta(\tilde{z}^{(0)},\tilde{z}^{(1)},\tilde{z}^{(2)},\tilde{z}^{(3)},\tilde{z}^{(4)})} \omegah \\
&= \int_{\Delta(\tilde{z}^{(0)},\tilde{z}^{(1)},\tilde{z}^{(2)},\tilde{z}^{(3)},\tilde{z}^{(4)})} \omegah\big|_{L} \\
& = \int_{\Delta(\tilde{z}^{(0)},\tilde{z}^{(1)},\tilde{z}^{(2)},\tilde{z}^{(3)},\tilde{z}^{(4)})} i^*(\omegah),
\end{align*}
where $i: L \hookrightarrow H^n_\H$ is an embedding of the quaternionic line containing the $5$-tuple $(\tilde{z}^{(0)},\tilde{z}^{(1)},\tilde{z}^{(2)},\tilde{z}^{(3)},\tilde{z}^{(4)})$ (and we identify $i(\tilde{z}^{(j)})$ with $\tilde{z}^{(j)}$). It follows that 
\[
    \sup_{z^{(0)},\dots,z^{(4)}\in H^n_\H} \int_{\Delta(z^{(0)},\dots,z^{(4)})} i^*(\omegah) = \sup_{z^{(0)},\dots,z^{(4)}\in H^n_\H} \int_{\Delta(z^{(0)},\dots,z^{(4)})} \omegah.
\]
\end{proof}

\addcontentsline{toc}{section}{Bibliography}


\begin{thebibliography}{widest-label}

\bibitem[ApKi07]{AK}  B. N. Apanasov and I. Kim, \textsl{Cartan angular invariant and deformations of rank 1 symmetric spaces}, Sbornik: Mathematics 198 (2), pp. 147-169 (2007)

\bibitem[Buc08a]{Bu1} M. Bucher-Karlsson, \textsl{The proportionality constant for the simplicial volume of locally symmetric spaces}, Colloq. Math. 111 No 2, pp. 183-198 (2008)

\bibitem[Buc08b]{Bu2} M. Bucher-Karlsson, \textsl{The simplicial volume of closed manifolds covered by $\mathbb{H}^2\times\mathbb{H}^2$}, J. Topology 1, no. 3, pp. 584-602 (2008)

\bibitem[BucMo12]{BuMo}  M. Bucher and N. Monod, \textsl{The norm of the Euler class}, Math. Annalen 353 No 2, pp. 523-544 (2012)

\bibitem[BuIoWie07]{BIW07} M. Burger, A. Iozzi, and A. Wienhard, \textsl{Hermitian symmetric spaces and K\"ahler rigidity}, Transform. Groups, 12(1), pp. 5-32 (2007)

\bibitem[BuIoWie09]{BIW09} M. Burger, A. Iozzi, and A. Wienhard, \textsl{Tight homomorphisms and Hermitian symmetric spaces}, Geom. Funct. Anal., 19(3), pp. 678-721(2009)

\bibitem[BuIoWie10]{BIW10} M. Burger, A. Iozzi, and A. Wienhard, \textsl{Surface group representations with maximal Toledo invariant}, Ann. of Math. (2), 172(1), pp. 517-566 (2010)

\bibitem[BuIoWie14]{BIW14} M. Burger, A. Iozzi, and A. Wienhard. \textsl{Higher Teichm\"uller spaces: from SL(2,R) to other Lie groups}, In Handbook of Teichm\"uller theory. Vol. IV, volume 19 of IRMA Lect. Math. Theor. Phys., Eur. Math. Soc., Z\"urich, pp. 539-618 (2014)

\bibitem[Cl{\O}r03]{CO} J-L. Clerc and B. {\O}rsted, \textsl{The Gromov norm of the Kaehler class and the Maslov index}, Asian J. Math. 7, no. 2, pp. 269-295 (2003)

\bibitem[Co92]{Co} K. Corlette, \textsl{Archimedian superrigidity and hyperbolic geometry}, Ann.of Math. 135, pp. 165-182 (1992)

\bibitem[DoTo87]{DT} A. Domic and D. Toledo, \textsl{The Gromov norm of the Kaehler class of symmetric domains}, Math. Ann. 276, no. 3, pp. 425-432 (1987)

\bibitem[Gol99]{Gol} W.M. Goldman, \textsl{Complex Hyperbolic Geometry}, Oxford Mathematical Monographs. Oxford University Press (1999) 

\bibitem[Gr82]{Gr} M. Gromov, \textsl{Volume and bounded cohomology}, Inst. Hautes \' Etudes Sci. Publ. Math., 56, pp.5-99 (1982)

\bibitem[HaMu82]{HM} U. Haagerup and H. Munkholm, \textsl{Simplices of maximal volume in hyperbolic n-space}, Acta Math. 147, no. 1-2, pp. 1-11 (1981)

\bibitem[He78]{He} S. Helgason, \textsl{Differential geometry, Lie groups, and symmetric spaces}, Graduate Studies in Mathematics, 34, Corrected reprint of the 1978 original, American Mathematical Society, Providence, RI (2001)

\bibitem[KiPa03]{KP} I. Kim and J.R. Parker \textsl{Geometry of quaternionic hyperbolic manifolds}, Mathematical Proceedings of the Cambridge Philosophical Society 135, pp. 291-320 (2003)

\bibitem[Pie16]{Pie} H. Pieters, \textsl{Hyperbolic spaces and bounded cohomology}, Ph.D. thesis, University of Geneva, Sc. 4971 - 2016/08/12 (2016)

\bibitem[Thu78]{Th1} W. P. Thurston, \textsl{Geometry and topology of 3-manifolds}, Lecture Notes, Princeton (1978)

\end{thebibliography}
\end{document}